\renewcommand{\pi}{\uppi}
\newtheorem{thmm}{Theorem}[section]
\newtheorem{prop}[thmm]{Proposition}
\newcommand{\real}{\mathbb{R}}
\newcommand{\comp}{\mathbb{C}}
\newcommand{\nat}{\mathbb{N}}
\newcommand{\im}{\operatorname{Im}}
\newcommand{\re}{\operatorname{Re}}
\newcommand{\iu}{\mathcal{UI}}
\begin{document}
\begin{frontmatter}

\title{On a class of explicit Cauchy--Stieltjes transforms related to
monotone stable and free Poisson laws}
\runtitle{Cauchy--Stieltjes transforms related to monotone stable and
free Poisson laws}

\begin{aug}
\author[a]{\fnms{Octavio} \snm{Arizmendi}\corref{}\thanksref{a}\ead[label=e1]{arizmendi@math.uni-sb.de}}
\and
\author[b]{\fnms{Takahiro} \snm{Hasebe}\thanksref{b}\ead[label=e2]{thasebe@math.kyoto-u.ac.jp}}
%%%% inicialai - be tarpu
\runauthor{O. Arizmendi and T. Hasebe} %% auto
\address[a]{Universit\"{a}t des Saarlandes, FR 6.1-Mathematik, 66123
Saarbr\"{u}cken, Germany.\\ \printead{e1}}
\address[b]{Graduate School of Science, Kyoto University, Kyoto
606-8502, Japan.\\ \printead{e2}}
\end{aug}

% HISTORY:
\received{\smonth{8} \syear{2011}}
\revised{\smonth{8} \syear{2012}}

% ABSTRACT
%
\begin{abstract}
We consider a class of probability measures $\mu_{s,r}^{\alpha}$
which have explicit Cauchy--Stieltjes transforms. This class includes a
symmetric beta distribution, a free Poisson law and some beta
distributions as special cases. Also, we identify $\mu_{s,2}^{\alpha
}$ as a free compound Poisson law with L\'evy measure a monotone $\alpha
$-stable law. This implies the free infinite divisibility of $\mu_{s,2}^{\alpha}$.
Moreover, when symmetric or positive, $\mu_{s,2}^{\alpha}$ has a representation as the free multiplication of a
free Poisson law and a monotone $\alpha$-stable law. We also
investigate\vspace*{1pt} the free infinite divisibility of $\mu_{s,r}^{\alpha}$
for $r\neq2$. Special cases include the beta distributions $B(1-\frac
{1}{r},1+\frac{1}{r})$ which
are freely infinitely divisible if and only if $1 \leq r \leq2$.
\end{abstract}

% KEYWORDS
%
\begin{keyword}
\kwd{beta distribution}
\kwd{free infinite divisibility}
\kwd{free Poisson law}
\kwd{monotone stable law}
\end{keyword}

\end{frontmatter}

%s1 #&#
\section{Introduction}
In random matrix theory, a Marchenko--Pastur law describes the
asymptotic behavior of
the spectrum of the so-called Wishart matrices \cite{MP}.
In free probability, a Marchenko--Pastur (or free Poisson) law plays the
role that a Poisson distribution does in probability theory: it is the
limiting distribution of $((1-\frac{\lambda}{N})\delta_{0}+\frac
{\lambda}{N}\delta_{1})^{\boxplus N}$ when $N \to\infty$. For this
reason, it is called a free Poisson law in the context of free probability.
On the other hand, an arcsine law appears in probability theory as the
law of the proportion of the time during which a Wiener process is
nonnegative. In monotone probability, an arcsine law plays the role of
a Gaussian law \cite{Mur2}. In particular, an arcsine law is a monotone
stable law with stability index $\alpha=2$ \cite{Has2}.

Arizmendi \textit{et al.} \cite{ABNPA09} found an interplay between
Marchenko--Pastur and arcsine laws. They introduced a class $\mathit{FTA}$ of
freely infinitely divisible distributions whose L\'{e}vy measures are
mixtures of a symmetric arcsine law. The building block of this class
is a symmetric beta distribution
\[
b_{s}(\mathrm{d}x) = \frac{1}{\pi\sqrt{s}}|x|^{-1/2}\bigl(\sqrt
{s}-|x|\bigr)^{1/2}\,\mathrm{d}x, \qquad -\sqrt{s} \leq x \leq\sqrt{s}.
\]
The free L\'evy measure of $b_s$ coincides with an arcsine law.
Moreover, $b_s$ is equal to the free multiplicative convolution of an
arcsine law with a Marchenko--Pastur law, and hence, is freely
infinitely divisible. Moreover, its Cauchy--Stieltjes transform (or
Cauchy transform for short) can be calculated explicitly as
%
%e1.1 #&#
\begin{equation}
\label{eq0111} G_{b_s}(z)=-\sqrt{\frac{2}{s}}\sqrt{1-
\sqrt{1-sz^{-2}}},\qquad  s > 0.
\end{equation}
%
%Thus, there are interplays involving Marchenko--Pastur laws and arcsine
%laws.

This paper studies a class of Cauchy--Stieltjes (or Cauchy for short)
transforms related to Marchenko--Pastur laws and monotone stable laws.
We deform the above Cauchy transform (\ref{eq0111}) to introduce a
family of probability measures which include the symmetric beta
distribution~$b_s$, Marchenko--Pastur and some other beta distributions
as special cases. More explicitly, for $0 < \alpha\leq2$, we define
%
%e1.2 #&#
\begin{equation}
\label{eq46} G_{s,r}^{\alpha}(z)=-r^{1/\alpha} \biggl(
\frac{1-(1-s(-
{1}/{z})^{\alpha})^{1/r}}{s} \biggr)^{1/\alpha},\qquad  r>0, s\in\mathbb {C}\setminus\{0\}.
\end{equation}
The branches of powers have to be defined carefully and the precise
definition is presented in Section \ref{sec2}. It can be shown that the
function (\ref{eq46}) defines the Cauchy transform of a probability
measure $\mu_{s,r}^{\alpha}$ for $1\leq r<\infty$ and $(\alpha,s)$
satisfying what we call an admissible condition. This condition is
related to stable distributions.

The reciprocal Cauchy transforms $F_{s,r}^{\alpha}=\frac
{1}{G_{s,r}^{\alpha}}$ satisfy
\[
F_{s,r}^{\alpha}\circ F_{us,u}^{\alpha}=F_{us,ur}^{\alpha}.
\]
We note that the same relation appears for probability measures
introduced by M\l otkowski \cite{Mlot}. This relation enables us to
calculate the inverse map explicitly:
%
%e1.3 #&#
\begin{equation}
\label{eq1} \bigl(F^{\alpha}_{s,r}\bigr)^{-1} =
F^\alpha_{s/r,1/r}.
\end{equation}
The inverse map of the reciprocal Cauchy transform, which is hard to
calculate in general, is crucial to investigate free infinite
divisibility. Therefore, the explicit form of $(F^\alpha_{s,r})^{-1}$
is quite useful and we can prove the free infinite divisibility of $\mu^\alpha_{s,r}$ for some parameters.

The probability measure $\mu_{s,2}^{\alpha}$ turns out to be a free
compound Poisson distribution with L\'evy measure a monotone $\alpha
$-stable law $a_{s/4}^{\alpha}$. From Proposition 4 of \cite{P-S}, if
symmetric or positive, $\mu_{s,2}^{\alpha}$ coincides with the free
multiplicative convolution of a Marchenko--Pastur law $m$ and the
monotone $\alpha$-stable distribution $a_{s/4}^{\alpha}$:
\[
\mu_{s,2}^{\alpha}=m\boxtimes a_{s/4}^{\alpha}.
\]
Moreover, $\mu^{\alpha}_{s,r}$ is freely infinitely divisible for other
parameters, not only for $r=2$. An interesting case of $\mu^\alpha_{s,r}$ is
$\mu^{1}_{-1,r}$ which is a beta distribution with the
density $\frac{r\sin(\pi/r)}{\pi} x^{-1/r}(1-x)^{1/r}$ on $(0,1)$. We
prove that this is freely infinitely divisible if and only if $1 \leq r
\leq2$.
We also mention that, while an arcsine law is not freely infinitely
divisible, some monotone stable laws are. This fact was implicitly
proved by Biane in a different context; see Corollary 4.5 of \cite{Biane}.

%s2 #&#
\section{Preliminary results}\label{sec22}
%s2.1 #&#
\subsection{The Voiculescu transform and the $R$-transform}
In this paper, $\comp_+$ and $\comp_-$, respectively, denote the upper
half-plane and the lower half-plane of~$\comp$.

An additive free convolution $\mu\boxplus\nu$ of compactly supported
probability measures $\mu$ and $\nu$ on $\real$ is the probability
distribution of $X+Y$, where $X$ and $Y$ are self-adjoint free
independent random variables with distributions $\mu$ and $\nu$,
respectively, \cite{V2}. This convolution was extended to all Borel
probability measures in \cite{Be-Vo}. A probability measure $\mu$ on
$\real$ is said to be $\boxplus$-infinitely divisible if for any $n \in
\nat$, there is $\mu_n$ such that $\mu= \mu_n^{\boxplus n}$.

For a probability measure $\mu$ on $\real$, let us denote by $G_\mu$
the Cauchy transform and by $F_\mu$ its reciprocal: $G_\mu(z)=\int_{\real}\frac{\mu(\mathrm{d}x)}{z-x}$ and $F_\mu(z)=\frac{1}{G_\mu(z)}$.
Bercovici and Voiculescu \cite{Be-Vo} proved the existence of $\eta,\eta' >0$ and $M, M' >0$ such that $F_\mu$ is univalent in $\Gamma_{\eta
,M}:= \{z \in\comp_+\dvt  \im z >M,  |\im z| > \eta|\re z| \}$ and $\Gamma_{\eta',M'} \subset F_\mu(\Gamma_{\eta,M})$. The Voiculescu transform
$\phi_\mu$ is defined in $\Gamma_{\eta',M'}$ to be $F_\mu^{-1}(z)-z$.
The free convolution $\mu\boxplus\nu$ is characterized by
\[
\phi_{\mu\boxplus\nu}(z) = \phi_\mu(z) + \phi_\nu(z)
\]
in $\Gamma_{\eta'',M''}$ for some $\eta'',M''>0$. $R_\mu(z):=z\phi_\mu
(\frac{1}{z})$ is called an $R$-transform. A probability measure $\mu$
is $\boxplus$-infinitely divisible if and only if $\phi_\mu$ is the
restriction of an analytic map from $\comp_+$ into $\comp_- \cup\real
$ \cite{Be-Vo}. This is also equivalent to the L\'{e}vy--Khintchine type
representation suggested in~\cite{BNT06}
%
%e2.1 #&#
\begin{equation}
\label{eq00} R_\mu(z) = cz +az^2 + \int
_{\real} \biggl( \frac{1}{1-xz}-1-xz\textbf
{1}_{\{|x|\leq1\}}(x) \biggr) \nu(\mathrm{d}x),
\end{equation}
for some $c \in\real, a\geq0$ and a nonnegative measure $\nu$
satisfying $\nu(\{0\})=0$ and $\int_{\real} \min\{1,x^2 \}\times \nu(\mathrm{d}x) <
\infty$. We call $\nu$ the L\'{e}vy measure of $\mu$.

The following is useful to calculate the L\'{e}vy measure. For a
$\boxplus$-infinitely divisible measure $\mu$, its Voiculescu transform
can be written as
\[
\phi_\mu(z) = \gamma+ \int_{\mathbb{R}} \biggl(
\frac{1}{z-x}-\frac
{x}{1+x^2} \biggr) \bigl(1+x^2\bigr)
\tau(\mathrm{d}x)
\]
for some $\gamma\in\mathbb{R}$ and a nonnegative finite measure $\tau
$ \cite{Be-Vo}.
The measure $\tau$ can be calculated, by using the Stieltjes inversion
formula \cite{Akh,T}, as
%
%e2.2 #&#
\begin{equation}
\label{eqtau} \int_u^v \bigl(1+x^2
\bigr)\tau(\mathrm{d}x) = -\frac{1}{\pi} \lim_{y \searrow0}\int_{u}^v
\operatorname{Im} \phi_\mu(x+\mathrm{i}y)\,\mathrm{d}x
\end{equation}
for all continuity points $u ,v$ of $\tau$. Considering the relation
$R_\mu(z)=z\phi_\mu(\frac{1}{z})$ and (\ref{eq00}), we obtain $\frac
{1+x^2}{x^2}\tau|_{\mathbb{R}\setminus\{0\}}=\nu|_{\mathbb{R}\setminus
\{0\}}$ and $\tau(\{0\})=a$, where $a$ is the real number of (\ref
{eq00}). In particular, if the functions $f_\mu^y(x):=-\frac{1}{\pi
}\operatorname{Im} \phi_\mu(x+\mathrm{i}y)$ converges uniformly to a continuous
function $f_\mu(x)$ ($y \searrow0$) on an interval $[u,v]$, then $\tau
$ is absolutely continuous in $[u,v]$ with density $f_\mu(x)$. Hence,
$\nu$ is also absolutely continuous in $[u,v]$ with density
%
%e2.3 #&#
\begin{equation}
\label{eqle} \frac{1+x^2}{x^2}f_\mu(x).
\end{equation}
Regarding atoms, the following formula holds: $\tau(\{x \})= \frac
{1}{1+x^2}\lim_{y \searrow0} \mathrm{i}y \phi_\mu(x+\mathrm{i}y)$ for any \mbox{$x \in\real$.}

%s2.2 #&#
\subsection{The $S$-transform}
Multiplicative free convolution $\boxtimes$ for probability measures on
$[0,\infty)$ was investigated in \cite{Voi87,Be-Vo}. This convolution
corresponds to the probability distribution of $X^{1/2}Y X^{1/2}$, or
equivalently $Y^{1/2}XY^{1/2}$, where $X$ and $Y$ are positive free
independent random variables. This convolution is characterized by
$S$-transforms defined as follows. For a probability measure $\mu$ on
$\real$, we let $\psi_\mu(z):=\int_{\real}\frac{zx}{1-zx}\mu(\mathrm{d}x)$. $\psi_\mu$ coincides with a moment generating function if $\mu$ has finite
moments of all orders.
In \cite{Be-Vo}, $\psi_\mu$ was proved to be univalent in the left
half-plane $\mathrm{i}\comp_+$ for a probability measure $\mu$ on $[0,\infty)$
with $\mu(\{0\}) <1$. Moreover, $\psi_\mu(\mathrm{i}\comp_+)$ contains the
interval $(1-\mu(\{0\}),0)$. Then a map $\chi_\mu\dvt  \psi_\mu(\mathrm{i}\comp_+)
\to \mathrm{i}\comp_+$ is defined by the inverse of $\psi_\mu$. The
$S$-transform is defined as
%
%e2.4 #&#
\begin{equation}
\label{eq4} S_\mu(z):= \frac{1+z}{z}\chi_\mu(z),\qquad  z
\in \psi_\mu(\mathrm{i}\comp_+).
\end{equation}
Using the $S$-transform, $\mu\boxtimes\nu$ is characterized as
%
%e2.5 #&#
\begin{equation}
\label{eq5} S_{\mu\boxtimes\nu} (z) = S_\mu(z)S_\nu(z)
\end{equation}
in a common domain including an interval of the form $(-\varepsilon,0)$.

More generally, a multiplicative convolution $\mu\boxtimes\nu$ can be
defined if $\mu$ or $\nu$ is supported on $[0,\infty)$. While (\ref
{eq5}) is expected to hold also in this case, it is not known whether
an $S$-transform can be defined for every probability measure. It was
shown in \cite{Voi87} to hold for measures with bounded support and
nonvanishing mean, while the bounded case when $\mu$ has vanishing
mean was solved in \cite{RS}. For the unbounded case, as a partial
solution, Arizmendi and P\'erez-Abreu \cite{A-P} defined an $S$-transform of a
symmetric probability measure as follows. For a symmetric distribution
$\mu\neq\delta_0$, there is a unique probability distribution $\mu^2
\neq\delta_0$ on $[0,\infty)$ such that $\psi_\mu(z)=\psi_{\mu
^2}(z^2)$ for $z \in\comp_+$. Using a property of $\psi_{\mu^2}$, we
can conclude that $\psi_\mu$ is univalent in $\mathbb{H}:=\{z\in\comp_+\dvt \im z > |\re z| \}$.
 Moreover, $\psi_\mu(\mathbb{H})$ contains the
interval $(1-\mu(\{0\}),0)$. Therefore, we can define
$\chi_\mu= \psi_\mu^{-1}\dvt  \psi_\mu(\mathbb{H}) \to\mathbb{H}$ and
$S_\mu(z):= \frac{1+z}{z}\chi_\mu(z)$. Then (\ref{eq5}) still holds if
$\mu$ or $\nu$ is symmetric and the other is supported on $[0,\infty)$.

Finally, we recall the analogues of compound Poisson distributions,
which will be important in this paper.
%
%de2.1 #&#
\begin{defi}
A probability measure $\mu$ is said to be free compound Poisson if
$R_\mu(z) = \lambda\psi_\nu(z)$ for a probability measure $\nu$ with
$\nu(\{0\})=0$ and a $\lambda\geq0$. In this case,
$\lambda\nu$ coincides with the L\'evy measure of $\mu$.
\end{defi}
The Marchenko--Pastur law $m$ with mean one belongs to the class of free
compound Poisson measures; the pair $(\lambda, \nu)$ is given by $(1,
\delta_1)$.
$m$ is also characterized by $S_m(z)=\frac{1}{z+1}$ in terms of the
$S$-transform.

%s3 #&#
\section{\texorpdfstring{Probability measures $\mu_{s,r}^\alpha$}{Probability measures mu s,r alpha}}\label{sec2}
Let $r>0, 2 \geq\alpha>0$ and $s \in\comp\setminus\{0\}$. For any
$\eta>0$, we will find an $M>0$ such that the function
%
%e3.1 #&#
\begin{equation}
\label{eq8} G_{s,r}^{\alpha}(z)=-r^{1/\alpha} \biggl(
\frac{1-(1-s(-
{1}/{z})^{\alpha})^{1/r}}{s} \biggr)^{1/\alpha}
\end{equation}
is defined as an analytic map in $\Gamma_{\eta,M}$.
To make the definition precise, we take branches of powers $z^{1/\alpha
}$, $z^{1/r}$ and $z^{\alpha}$ as follows:
\begin{longlist}[(1)]
\item[(1)] $z^{1/\alpha}$ and $z^{\alpha}$ are, respectively, defined as
$\mathrm{e}^{{1}/{\alpha}\log_{(1)}z}$ and $\mathrm{e}^{\alpha\log_{(1)}z}$ in
$\comp\setminus[0,\infty)$, where $\log_{(1)}$ denotes a logarithm
satisfying $\im(\log_{(1)}z) \in(0,2\pi)$;
\item[(2)] $z^{1/r}$ is defined to be $\mathrm{e}^{{1}/{r}\log_{(2)}z}$ in
$\comp\setminus(-\infty,0]$, where $\log_{(2)}$ is a logarithm
satisfying $\im(\log_{(2)}z) \in(-\pi,\pi)$.
\end{longlist}
We show that these branches enable us to define $G_{s,r}^{\alpha}$ as
an analytic function in $\Gamma_{\eta,M}$ for an $M>0$ depending on
$\eta>0, s \in\mathbb{C}\setminus\{0\}, r >0$.
Under the definition (2), the function $(1+w)^{1/r}$ is equal to the
generalized binomial expansion $\sum_{n=0}^\infty{}_{1/r}C_n w^n$ for
$|w| < 1$, where ${}_{1/r}C_n$ is the generalized binomial coefficient
$\frac{1/r(1/r-1)\cdots(1/r-n+1)}{n!}$. Therefore, for $z \in\comp_+$
with large $|z|$, the function $ \frac{1-(1-s(-{1}/{z})^{\alpha
})^{1/r}}{s}$ can be written as
%
%e3.2 #&#
\begin{equation}
\label{eq403} \frac{1-(1-s(-{1}/{z})^{\alpha})^{1/r}}{s}= \biggl(-\frac{1}{z}
\biggr)^\alpha\sum_{n=1}^\infty{}_{1/r}C_n
(-s)^{n-1} \biggl(-\frac
{1}{z} \biggr)^{(n-1)\alpha},
\end{equation}
where $(-\frac{1}{z})^{n\alpha}$ is defined by $ ((-\frac
{1}{z})^\alpha )^n$. For any $\eta>0, s \in\mathbb{C}\setminus\{
0\}, r>0$, there is an $M>0$, independent of $\alpha\in(0,2]$, such
that the image of the map $\Gamma_{\eta,M} \ni z \mapsto (-\frac
{1}{z} )^\alpha\sum_{n=1}^\infty{}_{1/r}C_n (-s)^{n-1}\times
(-\frac{1}{z} )^{(n-1)\alpha}$ is contained in the sector $\{z \in
\comp\setminus\{0\}\dvt  \arg z \in(0, \alpha\pi)\}$. Therefore, we can
take the power of (\ref{eq403}) by $1/\alpha$ and $G^\alpha_{s,r}$ is
well-defined as an analytic map in $\Gamma_{\eta,M}$.

We note that $G_{s,r}^{\alpha}(z)$ can be expanded in a series
regarding $ (-\frac{1}{z} )^\alpha$:
%
%e3.3 #&#
\begin{eqnarray}
\label{eq7} %
G_{s,r}^{\alpha}(z) &=& -r^{1/\alpha}
\Biggl( \biggl(-\frac{1}{z} \biggr)^\alpha\sum
_{n=1}^\infty{}_{1/r}C_n
(-s)^{n-1} \biggl(-\frac
{1}{z} \biggr)^{(n-1)\alpha}
\Biggr)^{1/\alpha}
\nonumber\\
&=& \frac{1}{z} \Biggl( 1+ r\sum_{n=1}^\infty{}_{1/r}C_{n+1}
(-s)^{n} \biggl(-\frac{1}{z} \biggr)^{n\alpha}
\Biggr)^{1/\alpha}
\\
&=&\frac{1}{z}\sum_{n=0}^\infty
c_n(\alpha, s, r) \biggl(-\frac
{1}{z} \biggr)^{n\alpha},\qquad
z\in\Gamma_{\eta,M} \nonumber
\end{eqnarray}
for some complex coefficients $c_n(\alpha, s ,r)$ with $c_0=1$. In the
second line, we used the formula $ ( (-\frac{1}{z}
)^{\alpha}(1+\mathrm{o}(1/z)) )^{1/\alpha} = -\frac
{1}{z}(1+\mathrm{o}(1/z))^{1/\alpha}$. This formula is valid in $\Gamma_{\eta
,M}$ if the function $ ( 1+ \mathrm{o}(1/z)  )^{1/\alpha}$ is
understood to be the generalized binomial expansion.

Let us define $F^\alpha_{s,r}(z):=\frac{1}{G^\alpha_{s,r}(z)}$ for $z
\in\Gamma_{\eta,M}$, where $M>0$ is large enough depending on $(\eta
,s,r)$. Then we have the following.
%
%th3.1 #&#
\begin{thmm}\label{thm001} Let $r,u > 0, 2 \geq\alpha>0, \eta>0$ and
$s \in\comp\setminus\{0\}$. Then\vspace*{-1pt}
\[
F_{s,r}^{\alpha}\circ F_{us,u}^{\alpha}=
F_{us,ur}^{\alpha}\vspace*{-1pt}
\]
holds in $\Gamma_{\eta, M}$ for some $M>0$.
\end{thmm}
\begin{pf}
We note that $(-G^\alpha_{us,u}(z))^\alpha$ is equal to $\frac
{1-(1-us(-{1}/{z})^\alpha)^{1/u}}{s}$ in $\Gamma_{\eta,M}$ with
large $M>0$. Also, we note that
$ ((1+w)^{1/r} )^{1/u} = (1+w)^{1/(ru)}$ for small $|w|$. Then\vspace*{-1pt}
\[
-r^{1/\alpha} \biggl( \frac{1-(1-s(-G^\alpha_{us,u}(z))^\alpha
)^{1/r}}{s} \biggr)^{1/\alpha} =
-(ur)^{1/\alpha} \biggl( \frac
{1-(1-us(-{1}/{z})^\alpha)^{1/(ru)}}{us} \biggr)^{1/\alpha}\vspace*{-1pt}
\]
for $z \in\Gamma_{\eta,M}$.
\end{pf}

Under further conditions on $(r,\alpha,s)$, the function $G^\alpha_{s,r}$ is well-defined in $\comp_+$ with values in $\comp_-$, and
therefore defines a probability measure.\vspace*{-1pt}
%
%th3.2 #&#
\begin{thmm}\label{thm1} Suppose $1 \leq r < \infty$, $0 < \alpha\leq
2$ and $s \in\comp\setminus\{0\}$. Assume that either of the
following conditions is satisfied:\vspace*{-1pt}
\begin{longlist}[(1)]
\item[(1)] $0 < \alpha\leq1$ and $(1-\alpha)\pi\leq\arg s \leq\pi$;

\item[(2)] $1 < \alpha\leq2$ and $0 \leq\arg s \leq(2-\alpha)\pi$.\vspace*{-1pt}
\end{longlist}

%f1 #&#
\begin{figure}

\includegraphics{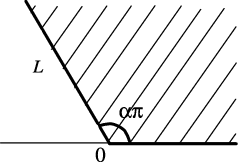}

\caption{The image of $\mathbb{C}_+$ under the map $z\mapsto(-\frac
{1}{z})^\alpha$.}\label{dia5}\vspace*{-3pt}
\end{figure}

Then $G_{s,r}^{\alpha}$ is the Cauchy transform of a probability
measure, which we denote by $\mu_{s,r}^\alpha$. Moreover,
$G_{s,r}^\alpha$ is univalent in $\comp_+$.
If $(\alpha, s)$ satisfies (1) or (2), it is said to be admissible.
\end{thmm}
\begin{pf}
Let $r \geq1$. We can immediately check that $z G^{\alpha}_{s,r}(z)
\to1$ as $z \to\infty, z\in\comp_+$, nontangentially. Therefore,
what needs to be proved is that $G^{\alpha}_{s,r}$ analytically maps
the upper half-plane to the lower half-plane.

We first focus on the case $0 < \alpha\leq1$ and $\theta:= \arg s
\in[\pi(1-\alpha), \pi]$. Then the image of the map
$\frac{1-(1-s(-{1}/{z})^{\alpha})^{1/r}}{s}$ in $\comp_+$ can be
described as in Figure~\ref{dia8} after some steps described in Figures~\ref{dia5}--\ref{dia7}.
We can see that the image of the map $\frac
{1-(1-s(-{1}/{z})^{\alpha})^{1/r}}{s}$ is contained in the sector
$\{z \in\comp\dvt  0< \arg z < \alpha\pi\}$. This implies the desired conclusion.

%f2 #&#
\begin{figure}

\includegraphics{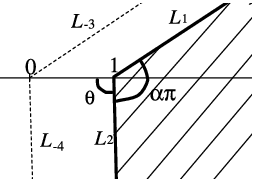}

\caption{The image of $\mathbb{C}_+$ under the map $z\mapsto1-s(-\frac
{1}{z})^\alpha$. $L_1$ and $L_2$ are half lines contained in the upper
half-plane and the lower half-plane, respectively. $L_{-3}$ and
$L_{-4}$ are preimages of $L_3$ and $L_4$ of Figure~\protect\ref{dia7} for the
map $z \mapsto z^{1/r}$, respectively.}\label{dia6}\vspace*{-3pt}
\end{figure}

%f3 #&#
\begin{figure}

\includegraphics{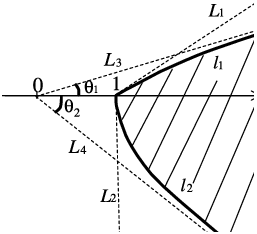}

\caption{The image of $\mathbb{C}_+$ under the map $z\mapsto(1-s(-\frac
{1}{z})^\alpha)^{1/r}$. $\theta_1$ and $\theta_2$ are defined by $\theta_1= \frac{\theta-(1-\alpha)\pi}{r}$ and $\theta_2 = \frac{\pi- \theta
}{r}$. $L_1$ and $L_2$ are the same half lines as in Figure~\protect\ref{dia6}.
$L_3$ and $L_4$ are starting at $0$. $l_1$ is tangent to $L_1$ at $1$
since $z^{1/r}$ is a conformal mapping. Moreover, it approaches $L_3$
asymptotically. $l_2$ is tangent to $L_2$ at $1$ from the same reason
and approaches $L_4$ asymptotically. }\label{dia7}
\end{figure}

%f4 #&#
\begin{figure}

\includegraphics{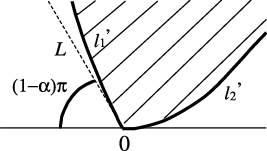}

\caption{The image of $\mathbb{C}_+$ under the map $z\mapsto\frac
{1-(1-s(-{1}/{z})^\alpha)^{1/r}}{s}$ which can be obtained from the
rotation and the translation of Figure~\protect\ref{dia7}. $l_1$' is tangent to
$L$ and $l_2$' is tangent to the $x$ axis at $0$.
}\label{dia8}\vspace*{-3pt}
\end{figure}

In the case $1 < \alpha\leq2$, we draw similar pictures; see Figure~\ref{dia9}--\ref{dia12}.
In Figure~\ref{dia12}, the image of $\frac
{1-(1-s(-{1}/{z})^{\alpha})^{1/r}}{s}$ is contained in the sector
$\{z \in\comp\dvt  0< \arg z < \alpha\pi\}$. Therefore, the image of the
map $ (\frac{1-(1-s(-{1}/{z})^{\alpha})^{1/r}}{s} )^
{{1}/{\alpha}}$ is contained in $\comp_+$.

In each step described in the figures, a new univalent map is added, so
that after all the steps, the map $G_{s,r}^\alpha$ is also univalent in
$\mathbb{C}_+$.
\end{pf}
%

%f5 #&#
\begin{figure}

\includegraphics{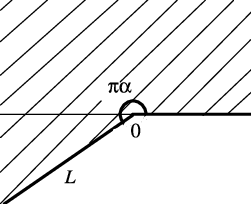}

\caption{The image of $\mathbb{C}_+$ under the map $z\mapsto(-\frac
{1}{z})^\alpha$.}\label{dia9}\vspace*{-3pt}
\end{figure}

%f6 #&#
\begin{figure}

\includegraphics{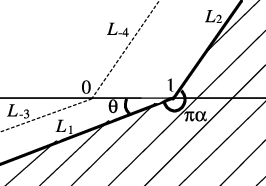}

\caption{The image of $\mathbb{C}_+$ under the map $z\mapsto1-s(-\frac
{1}{z})^\alpha$. $L_1$ and $L_2$ are half lines starting at $1$.
$L_{-3}$ and $L_{-4}$ are preimages of $L_3$ and $L_4$ of Figure~\protect\ref
{dia11} for the map $z \mapsto z^{1/r}$, respectively.}\label{dia10}
\end{figure}

%f7 #&#
\begin{figure}

\includegraphics{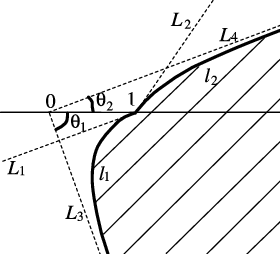}

\caption{The image of $\mathbb{C}_+$ under the map $z\mapsto(1-s(-\frac
{1}{z})^\alpha)^{1/r}$. $\theta_1$ and $\theta_2$ are defined by $\theta_1= \frac{\pi-\theta}{r}$ and $\theta_2 = \frac{\pi(\alpha-1) + \theta
}{r}$. $L_1$ and $L_2$ are the same half lines as in Figure~\protect\ref{dia10}.
$L_3$ and $L_4$ are starting at $0$. $l_1$ is tangent to $L_1$ at $1$
and approaches $L_3$ asymptotically. $l_2$ is tangent to $L_2$ at $1$
and approaches $L_4$ asymptotically. }\label{dia11}\vspace*{-3pt}
\end{figure}

%f8 #&#
\begin{figure}

\includegraphics{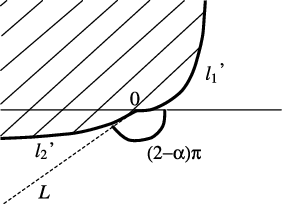}

\caption{The image of $\mathbb{C}_+$ under the map $z\mapsto\frac
{1-(1-s(-{1}/{z})^\alpha)^{1/r}}{s}$ which can be obtained from the
rotation and the translation of Figure~\protect\ref{dia11}. $l_1$' is tangent to
the $x$ axis and $l_2$' is tangent to $L$ at $0$.}\label{dia12}
\end{figure}

%re3.3 #&#
\begin{rem}\label{rem2}
(i) The admissible condition is related to monotone stable
distributions as mentioned in the next section.
\begin{longlist}[(iii)]
\item[(ii)] We have $\mu_{s,1}^\alpha=\delta_0$ for any admissible $(\alpha
,s)$. Therefore, the right inverse of $F^{\alpha}_{s,r}$ can be
calculated as $(F^{\alpha}_{s,r})^{-1} = F^\alpha_{s/r,1/r}$ from
Theorem \ref{thm001}.

\item[(iii)] From the relation $(F^\alpha_{s,r})^{-1} = F^\alpha_{s/r,1/r}$,
we can conclude that $G^\alpha_{s,r}$ does not define a probability
measure for $0 < r < 1$ and admissible
$(\alpha,s)$. The reason is as follows. If $\mu$ is a probability
measure and not a point mass, then $\operatorname{Im} F_\mu(z) > \operatorname{Im} z$
for any $z \in\comp_+$; see Corollary 5.3 of \cite{Be-Vo}. Hence,
$\operatorname{Im}  F_\mu^{-1}(z) < \operatorname{Im}  z$ if $z = F_\mu(w)$ and $F_\mu
$ is univalent around $w$. Therefore, $F_\mu^{-1}$ cannot be written as
$F_\nu$ for a probability measure $\nu$ on $\real$.

\item[(iv)] The measure $\mu_{s,r}^\alpha$ satisfies self-similarity with
respect to $s$ as follows.
If $\mu$ is a probability distribution of a random variable $X$, then
let $D_c\mu$ denote the distribution of $cX$.
For $c>0$, we have
\[
\mu_{cs,r}^\alpha= D_{c^{1/\alpha}}\mu_{s,r}^\alpha.
\]
\end{longlist}
\end{rem}

%s4 #&#
\section{A relation to monotone stable and free Poisson laws}
%The definition $z^{1/\alpha} = z^{\frac{1}{\alpha}\log z}$ with $\im(
%make $F^\alpha_{s/r,1/r}$ the right inverse of $F_{s,r}^\alpha$ in a
%region $\Gamma_{\eta,M}$, $z^{1/\alpha}$ cannot be defined as a
%single-valued map.
Let $a_{s}^\alpha$ be a monotone (strictly) $\alpha$-stable
distribution \cite{Has2} characterized by
\[
F_{a_{s}^\alpha}(z)= \bigl(z^\alpha+ (-1)^{\alpha-1}s
\bigr)^{1/\alpha}, \qquad z \in\comp_+,
\]
where $(\alpha,s)$ satisfies the admissible condition. $a_s^2$ is the
centered arcsine law with variance $s/2$ and $a_{s}^1$ is a Cauchy
distribution or a delta measure. The following properties are valuable
to note here.
\begin{longlist}[(1)]
\item[(1)] $a_{s}^\alpha$ is supported on $[0,\infty)$ if and only if $0 <
\alpha\leq1$ and $\arg s =\pi$.

\item[(2)] $a_{s}^\alpha$ is symmetric if and only if $\arg s =(1-\frac{\alpha
}{2})\pi$.

\item[(3)] Both $a_{Ri}^{1/2}$ and $a_{-R}^{1/2}$ are free $\frac
{1}{2}$-stable distributions, but not strictly stable.
\end{longlist}

The proofs are as follows. Let $s:=r\mathrm{e}^{\mathrm{i}\theta}$, $r>0$.
From the Stieltjes inversion formula, the density $p_s^\alpha(x)$ of
$a_s^\alpha$ is given by
\[
p_s^\alpha(x)= %
\cases{\displaystyle \frac{\sin[({1}/{\alpha})\arg(|x|^\alpha+ r\mathrm{e}^{\mathrm{i}(\alpha
\pi- \pi+ \theta)})]}{\pi(|x|^{2\alpha}-2r|x|^\alpha\cos(\alpha\pi
+ \theta) +r^2)^{1/(2\alpha)}},
&\quad $x>0,$ \vspace*{2pt}
\cr
\displaystyle\frac{\sin[({1}/{\alpha})\arg(|x|^\alpha+ r\mathrm{e}^{\mathrm{i}(\pi-\theta)})]}{\pi
(|x|^{2\alpha}-2r|x|^\alpha\cos\theta+r^2)^{1/(2\alpha)}}, &\quad $x<0,$ } %
\]
where $\arg z$ is defined in $(\mathbb{C}_+\cup\mathbb{R})\setminus\{
0\}$ so that it takes values in $[0,\pi]$. Now the properties (1) and
(2) can be proved easily.

(3) It was proved in \cite{Be-Vo} that free $\frac{1}{2}$-stable
distributions are characterized in terms of the Voiculescu transform
$\phi_\mu(z)=bz^{1/2}+c$, where $c\in\mathbb{R}$ and $\arg b\in[\pi
,3\pi/2]$. Moreover, strictly stable laws correspond to the case $c=0$.
Since $F_{a_{s}^{1/2}}(z)= (z^{1/2}-is)^{2}$, we have $\phi_{a_{s}^{1/2}}(z)=F^{-1}_{a_s^{1/2}}(z)-z=2\mathrm{i}sz^{1/2}-s^2$, which for
$s=-R$ or $s=\mathrm{i}R$ means that $a_{s}^{1/2}$ is free $\frac{1}{2}$-stable,
but not strictly stable.

%The reader is referred to \cite{P-S} for the definition of regular
%probability distributions.

%We calculate the $S$-transforms of $\mu_{s,2}^\alpha$ and $a_s^\alpha$
%explicitly.

The main theorem of this section is the following.
%
%th4.1 #&#
\begin{thmm}
$\mu_{s,2}^\alpha$ is a free compound Poisson distribution for any
admissible $(\alpha,s)$. Moreover, its L\'{e}vy measure $\nu_{s,2}^\alpha$ is given by the monotone stable distribution
$a_{s/4}^\alpha$.
\end{thmm}
\begin{pf}
Thanks to Proposition 4 of \cite{P-S}, it suffices to prove that
$R^\alpha_{s,2}(z) =\psi_{a_{s/4}^\alpha}(z)$, or equivalently, $\phi^\alpha_{s,2}(z) = z^2 G_{a_{s/4}^\alpha}(z)-z$, in an open set of the
form $\Gamma_{\eta,M}$.

As in (\ref{eq7}), a naive relation $(zw)^\alpha= z^\alpha w^\alpha$
may not be valid. To avoid this problem, we understand that $ ( 1
-\frac{s}{4} (-\frac{1}{z} )^\alpha )^{1/\alpha}$,
appearing below, is defined by using the generalized binomial expansion
$(1+w)^{1/\alpha}=\sum_{n=0}^\infty{}_{1/\alpha}C_n w^n$ for $|w|<1$.
Then for any $\eta>0$, the following calculation is correct in $\Gamma_{\eta,M}$ with large $M>0$:
\begin{eqnarray*}
F_{a_{s/4}^\alpha}(z) &=& \biggl(z^\alpha+ \frac{s}{4}(-1)^{\alpha-1}
\biggr)^{1/\alpha}
\\
&=& \biggl( z^\alpha-\frac{s}{4}(-1)^\alpha
\biggr)^{1/\alpha}
\\
&=& \biggl( z^\alpha \biggl( 1 -\frac{s}{4z^\alpha}(-1)^\alpha
\biggr) \biggr)^{1/\alpha}
\\
&=& \biggl( z^\alpha \biggl( 1 -\frac{s}{4} \biggl(-
\frac{1}{z} \biggr)^\alpha \biggr) \biggr)^{1/\alpha}
\\
&=& z \biggl( 1 -\frac{s}{4} \biggl(-\frac{1}{z}
\biggr)^\alpha \biggr)^{1/\alpha}.
\end{eqnarray*}
Therefore,
\[
z^2 G_{a_{s/4}^\alpha}(z)-z = \frac{z}{ ( 1 -({s}/{4}) (-
{1}/{z} )^\alpha )^{1/\alpha}} -z.
\]
On the other hand, the Voiculescu transform of $\mu_{s,2}^\alpha$ is
given as
\begin{eqnarray*}
\phi^\alpha_{s,2}(z) &=& F^{\alpha}_{s/2, 1/2}(z)-z
\\
&=& -\frac{1}{ ( ({1- (1-({s}/{2})(-{1}/{z})^\alpha
)^2})/{s}  )^{1/\alpha}}-z
\\
&=& -\frac{1}{ ( ({s(-{1}/{z})^\alpha- ({s^2}/{4}) (-
{1}/{z})^{2\alpha}})/{s}  )^{1/\alpha}} -z
\\
&=& -\frac{1}{ ((-{1}/{z})^\alpha (1 - ({s}/{4}) (-
{1}/{z})^{\alpha} )  )^{1/\alpha}} -z
\\
&=&\frac{z}{ ( 1 -({s}/{4}) (-{1}/{z} )^\alpha
)^{1/\alpha}} -z
\end{eqnarray*}
in $\Gamma_{\eta,M}$. Therefore, we have proved $\phi^\alpha_{s,2}(z) =
z^2 G_{a_{s/4}^\alpha}(z)-z$.
\end{pf}
With Proposition 4 of \cite{P-S}, the above result implies $\mu_{s,2}^\alpha= m \boxtimes a_{s/4}^\alpha$ if $\mu_{s,2}^\alpha$ and
$a_{s/4}^\alpha$ are symmetric or supported on $[0,\infty)$. We do not
know if this holds for any admissible pair $(\alpha,s)$ since
$S$-transforms are not defined for probability measures which are not
symmetric or supported on $[0,\infty)$.

%for the following reason. The probability measure $m \boxtimes a_{s/4}^
%operators affiliated to a finite von Neumann algebra \cite{Be-Vo}.
%However, difficulty is that we do not know how to define
%$S$-transforms of $\mu_{s,2}^\alpha$ and $a_{s/4}^{\alpha}$ for
%general $(\alpha,s)$.

%In the paper \cite{A-P}, the definition of $S$-transforms was extended
%to symmetric probability measures.
%Using this extension, we can prove the relation $\mu_{s,2}^\alpha= m
%are symmetric or supported on $[0,\infty)$ or $(-\infty,0]$.
%To this end, let us calculate the $S$-transforms of $\mu_{s,2}^\alpha$
%and $a_{s/4}^\alpha$.

%th4.2 #&#
\begin{thmm}\label{thm22} Let $(\alpha,s)$ satisfy either of the
following conditions: $0 < \alpha\leq1$ and $\arg s \in\{(1-\alpha
/2)\pi, \pi\}$; $1 < \alpha\leq2$ and $\arg s =(1-\alpha/2)\pi$.
Then $\mu_{s,2}^\alpha= m \boxtimes a_{s/4}^\alpha$.
\end{thmm}

%ex4.3 #&#
\begin{exa}
In general, the density of $\mu^\alpha_{s,2}$ is difficult to
calculate. In some cases, however, the density is explicit as we show below.
\begin{longlist}[(1)]
\item[(1)] Let us consider $(\alpha, s, r) = (1, \mathrm{i}, 2)$. Then $\mu_{\mathrm{i},2}^1$ is
the free multiplicative convolution of the Marchenko--Pastur law and a
symmetric Cauchy distribution. This is absolutely continuous with a
strictly positive density on $\real$ written as
\[
\frac{\sqrt{2}}{\pi} \biggl( \sqrt{1 + \sqrt{1 + \frac{1}{x^2}}} -\sqrt {2}
\biggr).
\]
We mention that this probability measure belongs to a class proposed in
\cite{Has5}.

\item[(2)] Let $(\alpha, s, r)=(\frac{1}{2}, -1, 2)$. Then the corresponding
probability measure is supported on $[0,\infty)$ with a density
\[
\frac{4\sqrt{2}}{\pi} \biggl( \frac{1}{\sqrt{2x}}-\sqrt{-1+\sqrt{1+
\frac
{1}{x}}} \biggr).
\]

\item[(3)] As shown in \cite{ABNPA09}, $\mu_{s,2}^2$ for $s >0$ is a symmetric
beta distribution:
\[
\mu_{s,2}^2(\mathrm{d}x) = \frac{1}{\pi\sqrt{s}}|x|^{-1/2}\bigl(\sqrt
{s}-|x|\bigr)^{1/2}\,\mathrm{d}x, \qquad -\sqrt{s} \leq x \leq\sqrt{s}.
\]
\end{longlist}
\end{exa}

In addition to $\mu_{s,2}^\alpha$, some monotone stable distributions
are also $\boxplus$-infinitely divisible. This property was essentially
proved by Biane \cite{Biane}.

%pr4.4 #&#
\begin{prop}
$a_s^\alpha$ is $\boxplus$-infinitely divisible if and only if $(\alpha
,s)$ satisfies either of the following conditions:
\begin{longlist}[(1)]
\item[(1)] $\frac{1}{2} \leq\alpha< 1$ and $\arg s \in\{(1-\alpha)\pi,\pi\}$;

\item[(2)] $\alpha=1$.
\end{longlist}
\end{prop}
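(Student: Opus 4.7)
The plan is to work directly with the Voiculescu transform. From $F_{a_s^\alpha}(z)=(z^\alpha+(-1)^{\alpha-1}s)^{1/\alpha}$ one inverts the power map to get
\[
\phi_{a_s^\alpha}(z)=F_{a_s^\alpha}^{-1}(z)-z=\bigl(z^\alpha-(-1)^{\alpha-1}s\bigr)^{1/\alpha}-z
\]
on a truncated cone $\Gamma_{\eta,M}$. By the Bercovici--Voiculescu criterion recalled in Section~\ref{sec22}, $a_s^\alpha$ is $\boxplus$-infinitely divisible if and only if $\phi_{a_s^\alpha}$ extends analytically to all of $\comp_+$ with values in $\comp_-\cup\real$. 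The case $\alpha=1$ is immediate: $\phi_{a_s^1}\equiv-s$ and admissibility forces $\arg s\in[0,\pi]$, so $\im(-s)\le 0$.

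For sufficiency in the range $\tfrac{1}{2}\le\alpha<1$, I would treat the two admissible endpoints of $\arg s$ separately. At $\arg s=(1-\alpha)\pi$ the constant $(-1)^{\alpha-1}s$ collapses to the real positive number $|s|$, so $\phi(z)=(z^\alpha-|s|)^{1/\alpha}-z$, and a chain of conformal images analogous to Figs.~\ref{dia5}--\ref{dia8} in the proof of Theorem~\ref{thm1} verifies that the image of $\comp_+$ sits in $\comp_-\cup\real$. At $\arg s=\pi$ the measure $a_s^\alpha$ is supported on $[0,\infty)$, and the conclusion reduces to Corollary~4.5 of Biane~\cite{Biane}. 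When $\alpha=\tfrac{1}{2}$ both endpoints are already covered by property~(3) above, which identifies $a_{Ri}^{1/2}$ and $a_{-R}^{1/2}$ as free $\tfrac{1}{2}$-stable distributions.

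For necessity, in each excluded admissible case I would exhibit a point $z_0\in\comp_+$ with $\im\phi_{a_s^\alpha}(z_0)>0$. In the interior regime $\tfrac{1}{2}\le\alpha<1$ with $\arg s\in((1-\alpha)\pi,\pi)$, a Stieltjes-inversion-type computation in the spirit of~\eqref{eqtau} isolates the signed density $-\tfrac{1}{\pi}\im\phi_{a_s^\alpha}(x+i0^+)$ of the candidate measure $\tau$, and I expect this density to change sign as soon as $\arg s$ is pulled off either endpoint, so $\phi_{a_s^\alpha}$ cannot map $\comp_+$ into $\comp_-\cup\real$. For $0<\alpha<1/2$ the leading asymptotic term of $\phi$ is a constant multiple of $z^{1-\alpha}$ with $1-\alpha>1/2$, and its argument sweeps a sector of aperture exceeding $\pi/2$ as $z$ traverses $\comp_+$; no admissible $\arg s$ can rotate this image back into $\comp_-\cup\real$. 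For $\alpha\in(1,2]$ the corresponding laws include the arcsine at $\alpha=2$, which is classically not $\boxplus$-infinitely divisible, and the same branch obstruction persists throughout $\alpha>1$ via the explicit formula for $\phi_{a_s^\alpha}$.

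The main obstacle is the interior-case necessity argument: the sign of $\im\phi_{a_s^\alpha}(x+i0^+)$ flips precisely at the admissible endpoints of $\arg s$, so a crude asymptotic expansion will not suffice; one has to carry out the Stieltjes limit and isolate an interval on which the imaginary part is strictly positive. Everything else is a routine but careful propagation of arguments through the three maps $z\mapsto z^\alpha$, $z\mapsto z-(-1)^{\alpha-1}s$, and $z\mapsto z^{1/\alpha}$, together with one appeal to Biane's corollary for the positive case.
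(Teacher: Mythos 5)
You should first note that the paper contains no proof of this proposition at all: it is attributed to Corollary 4.5 of \cite{Biane}, with the one-line remark that Biane's argument for special values of $\arg s$ applies verbatim to the general admissible $\arg s$. So there is nothing internal to compare against, and your outline must stand on its own. Its skeleton is reasonable --- compute $\phi_{a_s^\alpha}(z)=(z^\alpha-(-1)^{\alpha-1}s)^{1/\alpha}-z$, invoke the Bercovici--Voiculescu criterion, settle $\alpha=1$ directly, and send $\arg s=\pi$ to Biane --- and for the other endpoint you could avoid the conformal-mapping chain altogether: the density formula in Section 4 shows that $a_{|s|e^{i(1-\alpha)\pi}}^\alpha$ is the image of $a_{-|s|}^\alpha$ under $x\mapsto -x$, and $\boxplus$-infinite divisibility is reflection-invariant, so both endpoints are literally Biane's positive case.

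The necessity half, however, has a genuine gap and one argument that is simply wrong. For $0<\alpha<1/2$ you claim the leading term of $\phi$ at infinity, a constant multiple of $z^{1-\alpha}$, sweeps a sector of aperture exceeding $\pi/2$ which ``no admissible $\arg s$ can rotate back into $\comp_-\cup\real$''. The relevant threshold is aperture $\pi$, not $\pi/2$: writing $c:=(-1)^{\alpha-1}s$, the leading term is $-\frac{c}{\alpha}z^{1-\alpha}$, whose argument ranges over an interval of length $(1-\alpha)\pi<\pi$, and this interval lies in $[\pi,2\pi]$ exactly when $\arg c\in[0,\alpha\pi]$ --- which is precisely the admissible range. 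So the asymptotics at infinity detect nothing, as indeed they cannot, since $\im\phi_\mu\le 0$ holds automatically on a truncated cone for every probability measure; the obstruction for $\alpha<1/2$ is local (at the boundary branch point $c^{1/\alpha}$, where the power $1/\alpha>2$ overexpands angles) and your proposal never locates it. For the interior case $\arg s\in((1-\alpha)\pi,\pi)$ you explicitly leave the argument open, but for $\alpha\neq 1/2$ you are also missing the cleanest mechanism: the point $z_0=c^{1/\alpha}$ then lies in the open upper half-plane and is a branch point of $(z^\alpha-c)^{1/\alpha}$ with monodromy $e^{2\pi i/\alpha}\neq 1$, so $\phi$ admits no single-valued analytic continuation to $\comp_+$ at all and the boundary-density computation you propose is not even well posed. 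Only at $\alpha=1/2$ is a value computation needed, and there it is immediate: $\phi(z)=-2cz^{1/2}+c^2\to c^2$ as $z\to 0$ in $\comp_+$, and $\im(c^2)>0$ for $\arg c\in(0,\pi/2)$. As written, then, the proposal does not establish necessity in any of the nontrivial regimes, and its one concrete necessity argument does not work.
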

In fact, Biane considered only special values for $\arg s$, but the
same proof can be applied to the above result.

Finally, we note the $S$-transforms of $\mu^\alpha_{s,2}$ and
$a_s^\alpha$.
%
%pr4.5 #&#
\begin{prop}\label{prop21} Let $(\alpha,s)$ satisfy either of the
following conditions:
$0 < \alpha\leq1$ and $\arg s \in\{(1-\alpha/2)\pi, \pi\}$; $1 <
\alpha\leq2$ and $\arg s =(1-\alpha/2)\pi$. Then
\begin{longlist}[(1)]
\item[(1)] $S_{a_{s}^\alpha}(z) = -\frac{1}{z} ( \frac{(1+z)^\alpha
-1}{s} )^{1/\alpha}$,   $z \in(-1,0)$,

\item[(2)] $S_{\mu_{s,2}^\alpha}(z)= -\frac{4^{1/\alpha}}{z(z+1)} ( \frac
{(1+z)^\alpha-1}{s} )^{1/\alpha} = S_m(z)S_{a^\alpha
_{s/4}}(z)$,  $z \in(-1,0)$.
\end{longlist}
\end{prop}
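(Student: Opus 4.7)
The strategy is to obtain (1) by a direct computation from the explicit form of $F_{a_s^\alpha}$ and the definition of the $S$-transform, and then to read off (2) from Theorem \ref{thm22} together with the multiplicativity of the $S$-transform.

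For part (1), I would start from the elementary identity
\[
\psi_\mu(w) + 1 \;=\; \frac{1}{w\, F_\mu(1/w)},
\]
which follows immediately from $G_\mu = 1/F_\mu$ and the definition of $\psi_\mu$. Plugging in $F_{a_s^\alpha}(z) = (z^\alpha + (-1)^{\alpha-1}s)^{1/\alpha}$ and simplifying yields
\[
\psi_{a_s^\alpha}(w) \;=\; \bigl(1 + (-1)^{\alpha-1} s\, w^\alpha\bigr)^{-1/\alpha} - 1 .
\]
Next, writing $y = \psi_{a_s^\alpha}(w)$ and solving algebraically for $w^\alpha$ gives
\[
w^\alpha \;=\; \frac{(1+y)^\alpha - 1}{(-1)^\alpha\, s\, (1+y)^\alpha}.
\]
Taking the $1/\alpha$-th power, with the branch convention from Section \ref{sec2} so that $\bigl((-1)^\alpha\bigr)^{1/\alpha} = -1$, I get
\[
\chi_{a_s^\alpha}(y) \;=\; -\frac{1}{1+y}\left(\frac{(1+y)^\alpha - 1}{s}\right)^{1/\alpha}.
\]
Multiplying by $(1+y)/y$ then produces the announced formula for $S_{a_s^\alpha}$. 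I would check consistency at $\alpha = 1$ (where $S_{a_s^1}(z) = -1/s$) and $\alpha = 2$ before proceeding.

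For part (2), the hypothesis on $(\alpha,s)$ is exactly that of Theorem \ref{thm22}, so $\mu_{s,2}^\alpha = m \boxtimes a_{s/4}^\alpha$, with one factor, namely $m$, supported on $[0,\infty)$; thus multiplicativity of the $S$-transform applies and
\[
S_{\mu_{s,2}^\alpha}(z) \;=\; S_m(z)\, S_{a_{s/4}^\alpha}(z) \;=\; \frac{1}{z+1}\cdot\left(-\frac{1}{z}\right)\left(\frac{(1+z)^\alpha - 1}{s/4}\right)^{1/\alpha},
\]
which after pulling out the factor $4^{1/\alpha}$ gives the stated identity.

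\textbf{Main obstacle.} The only genuine subtlety is branch tracking. The identity $((-1)^\alpha)^{1/\alpha} = -1$ used in solving for $w$, and more broadly the coherence of the powers $(1+y)^\alpha$, $((1+y)^\alpha - 1)^{1/\alpha}$, and $s^{1/\alpha}$, must be justified under the branch conventions of Section \ref{sec2}, and one must verify that $\chi_{a_s^\alpha}(y)$ as computed above does land in the domain $i\mathbb{C}_+$ (when $a_s^\alpha$ is positive, i.e.\ $\arg s = \pi$) or $\mathbb{H}$ (when $a_s^\alpha$ is symmetric, i.e.\ $\arg s = (1-\alpha/2)\pi$), so that it actually coincides with the $\psi_{a_s^\alpha}$-inverse specified in Section \ref{sec22}. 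The assumption on $\arg s$ is precisely what makes these sign/branch checks go through and excludes non-admissible complex $s$ for which the formula would be ambiguous.
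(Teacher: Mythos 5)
Your argument is correct, and for part (1) it is essentially the paper's computation in different clothing: the paper computes $\phi_{a_s^\alpha}(w)=F_{a_s^\alpha}^{-1}(w)-w=(w^\alpha+(-1)^\alpha s)^{1/\alpha}-w$, sets $z=R_{a_s^\alpha}(w)$ to get the implicit relation $(1+z)^\alpha=1+s(-w)^\alpha$, and then reads off $S$ from the identity $R_\mu(zS_\mu(z))=z$; you instead invert $\psi_{a_s^\alpha}$ directly via $\psi_\mu(w)+1=1/(wF_\mu(1/w))$ and multiply $\chi$ by $(1+z)/z$. The two routes involve the same algebra and the same branch bookkeeping (which neither you nor the paper carries out in full detail, so you are at the paper's level of rigor there), and your consistency checks at $\alpha=1,2$ are a nice addition. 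For part (2) you genuinely diverge: the paper says only that ``a similar calculation is possible for $\mu_{s,2}^\alpha$,'' i.e.\ it recomputes $S_{\mu_{s,2}^\alpha}$ directly from $F_{s,2}^\alpha$ and observes the factorization afterwards, whereas you deduce the formula from Theorem \ref{thm22} together with multiplicativity of the $S$-transform (correctly noting that $m$ is supported on $[0,\infty)$ and $a_{s/4}^\alpha$ is either positive or symmetric under the stated hypotheses, so the extended $S$-transform machinery of Section \ref{sec22} applies). This is legitimate and not circular, since Theorem \ref{thm22} rests on the free compound Poisson identification and Proposition 4 of \cite{P-S}, not on Proposition \ref{prop21}; the only thing your shortcut forfeits is that the paper's direct computation of $S_{\mu_{s,2}^\alpha}$ doubles as an independent $S$-transform verification of the factorization $\mu_{s,2}^\alpha=m\boxtimes a_{s/4}^\alpha$, which your derivation instead presupposes.
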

\begin{pf}
The Voiculescu transform $\phi_{a_s^\alpha}$ can be calculated as $\phi_{a_s^\alpha}(w) = F_{a_s^\alpha}^{-1}(w)-w = (w^\alpha+ (-1)^\alpha
s)^{1/\alpha} -w$.
Let us define $z:=R_{a_s^\alpha}(w) = w\phi_{a_s^\alpha}(\frac{1}{w})$. Then
$(1+z)^\alpha= 1 +s(-w)^\alpha$. Since $R_{a_s^\alpha}(zS_{a_s^\alpha
}(z))=z$ holds, the desired formula follows. A similar calculation is
possible for $\mu_{s,2}^\alpha$.
\end{pf}

%s5 #&#
\section{\texorpdfstring{More on free infinite divisibility of $\mu_{s,r}^\alpha$}{More on free infinite divisibility of mu s,r alpha}}\label{sec3}
In the previous section, we proved that $\mu_{s,r}^\alpha$ is $\boxplus
$-infinitely divisible whenever $r =2$. In this section we will
determine infinite divisibility for $r \neq2$. We found the general
case is too difficult to treat, so that we only consider the problem
for some parameters. The main results of this section are the following.
\begin{longlist}[(1)]
\item[(1)] If $0 < \alpha\leq1$ and $1 \leq r \leq2$, then $\mu_{s,r}^\alpha$ is $\boxplus$-infinitely divisible.
\item[(2)] If $1 \leq\alpha\leq2$ and $1 \leq r \leq\frac{2}{\alpha
}$, then $\mu_{s,r}^\alpha$ is $\boxplus$-infinitely divisible.
\item[(3)] $\mu_{s,3}^1$ is $\boxplus$-infinitely divisible if and only
if $\arg s = \frac{\pi}{2}$.
\item[(4)] If $\alpha> 1$, there exists an $r_0=r_0(\alpha, s) > 1$
such that $\mu_{s,r}^\alpha$ is not $\boxplus$-infinitely divisible for
$r > r_0$.
\end{longlist}
We also show that some beta distributions are $\boxplus$-infinitely
divisible, and some are not.

%s5.1 #&#
\subsection{\texorpdfstring{The case $1\leq r\leq2$}{The case 1<=r<=2}} \label{subsec41}
To prove the free infinite divisibility of $\mu_{s,r}^\alpha$, we
introduce a subclass of $\boxplus$-infinitely divisible distributions.
%
%de5.1 #&#
\begin{defi}
A probability measure $\mu$ is said to be in class $\iu$\footnote{The
symbol $\iu$ stands for \textit{univalent inverse reciprocal Cauchy
transforms}.} if $F_\mu$ is univalent in $\comp_+$ and, moreover, $F_\mu^{-1}$ has an analytic continuation from $F_\mu(\comp_+)$ to $\comp_+$
as a univalent function.
\end{defi}
The following property was implicitly used in \cite{B}.
%
%pr5.2 #&#
\begin{prop}\label{lem1}
$\mu\in\iu$ implies that $\mu$ is $\boxplus$-infinitely divisible.
\end{prop}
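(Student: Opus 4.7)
The plan is to apply the Bercovici--Voiculescu characterization recalled in Section 2.1: $\mu$ is $\boxplus$-infinitely divisible if and only if $\phi_\mu$ extends analytically from $\Gamma_{\eta',M'}$ to a map $\comp_+ \to \comp_- \cup \real$. Since $\mu \in \iu$ gives, by definition, a univalent analytic continuation $g$ of $F_\mu^{-1}$ from $F_\mu(\comp_+)$ to all of $\comp_+$, setting $\phi_\mu(z) := g(z) - z$ already furnishes an analytic extension of the Voiculescu transform to $\comp_+$. The task thus reduces to verifying the inequality $\im g(z) \leq \im z$ for every $z \in \comp_+$.

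First I would isolate two structural facts about $g$. On $F_\mu(\comp_+)$, the identity $g = F_\mu^{-1}$ together with the standard inequality $\im F_\mu(w) \geq \im w$ for reciprocal Cauchy transforms of probability measures (recalled in Remark \ref{rem2}(iii)) yields immediately $\im g(z) \leq \im z$ there. Second, the identity $g \circ F_\mu = \mathrm{id}_{\comp_+}$ is valid on all of $\comp_+$ by analytic continuation from $F_\mu^{-1} \circ F_\mu = \mathrm{id}$, from which $\comp_+ \subseteq g(\comp_+)$. Since $g$ is univalent, it admits a holomorphic inverse on $g(\comp_+)$, and the restriction $g^{-1}|_{\comp_+}$ must coincide with $F_\mu$.

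The key step is then a dichotomy for each fixed $z \in \comp_+$: either $g(z) \in \comp_+$, in which case $z = g^{-1}(g(z)) = F_\mu(g(z))$ lies in $F_\mu(\comp_+)$ and $\im g(z) \leq \im z$ follows from the first structural fact; or $g(z) \in \comp_- \cup \real$, in which case $\im g(z) \leq 0 < \im z$ is automatic. Either way $\im \phi_\mu(z) \leq 0$, so $\phi_\mu(\comp_+) \subseteq \comp_- \cup \real$, and the Bercovici--Voiculescu criterion yields the conclusion.

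The main obstacle I anticipate is that the definition of $\iu$ does not force $g(\comp_+) \subseteq \comp_+$; a priori the univalent extension could leave the upper half-plane, and one cannot simply invoke $\im F_\mu \geq \im z$ outside $F_\mu(\comp_+)$. The dichotomy handles this elegantly: whichever half-plane $g(z)$ lands in, the required imaginary-part inequality holds---for free in the ``bad'' case where $g(z) \notin \comp_+$, and via reduction to the known fact in the ``good'' case---with the identification $g^{-1}|_{\comp_+} = F_\mu$ serving as the bridge between the two.
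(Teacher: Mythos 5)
Your proof is correct and is essentially the paper's own argument: both rest on the same three facts (the extension $g$ of $F_\mu^{-1}$ is univalent, $g(F_\mu(\comp_+))=\comp_+$, and $\im F_\mu(w)\geq \im w$), with your dichotomy on whether $g(z)\in\comp_+$ being just the contrapositive phrasing of the paper's proof by contradiction. No substantive difference.
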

\begin{pf}
The Voiculescu transform $\phi_\mu$ has an analytic continuation to
$\comp_+$ defined by $F_\mu^{-1}(z) -z$. If there existed a point $z_0
\in\comp_+$ such that $\im\phi_\mu(z_0) > 0$, then $\im F_\mu^{-1}(z_0) = \im(z_0 + \phi_\mu(z_0)) > \im z_0 >0$. Since $\im F_\mu
(w) \geq\im w$ for $w \in\comp_+ $, $\im F^{-1}_\mu(z) \leq\im z$
for $z \in F_\mu(\comp_+)$. Therefore, $z_0$ never belongs to $F_\mu
(\comp_+)$. However, since $F^{-1}_\mu$ is univalent in $\comp_+$ and
$F^{-1}_\mu(F_\mu(\comp_+)) = \comp_+$, $z_0$ must satisfy $\im F_\mu^{-1}(z_0) \leq0$, which contradicts the inequality $\im F_\mu^{-1}(z_0) > 0$. Therefore, $\phi_\mu$ maps $\comp_+$ into $\comp_-
\cup\real$.
\end{pf}
%
%re5.3 #&#
\begin{rem}If $\mu$ is $\boxplus$-infinitely divisible, then $F_\mu$ is
always univalent in $\comp_+$. This can be proved for instance by using
the so-called subordination functions. Let $\mu$ be $\boxplus
$-infinitely divisible and $\mu_t=\mu^{\boxplus t}$ be the probability
measure corresponding to the Voiculescu transform $t\phi_\mu$.
For $s \leq t$, an analytic function $\omega_{s,t}\dvt  \mathbb{C}_+\to
\mathbb{C}_+$ exists so that it satisfies $F_{\mu_s} \circ\omega_{s,t}
= F_{\mu_t}$. $\omega_{s,t}$ is called a subordination function. The
reader is referred also to equation (5.4) of \cite{Bel3}, where the
following replacements are required: $\mu$ by $\mu^{\boxplus s}$ and
$t$ by $t/s$. The relation $F_{\mu_s} \circ\omega_{s,t} = F_{\mu_t}$
is equivalent to
%
%e5.1 #&#
\begin{equation}
\label{eq111} F_{\mu_t}(z)= \frac{t/s}{t/s-1}\omega_{s,t}(z)-
\frac{z}{t/s-1}.
\end{equation}
Moreover, it is proved in Theorem 4.6 of \cite{Bel3} that
\[
\bigl|\omega_{s,t}(z_1)-\omega_{s,t}(z_2)\bigr|
\geq\tfrac{1}{2}|z_1 - z_2|, \qquad z_1,z_2
\in\mathbb{C}_+.
\]
Taking the limit $s \to0$ in (\ref{eq111}), we get
\[
\bigl|F_{\mu_t}(z_1)-F_{\mu_t}(z_2)\bigr| \geq
\tfrac{1}{2}|z_1 - z_2|,\qquad  z_1,z_2
\in\mathbb{C}_+,
\]
so that $F_{\mu_t}$ is univalent in $\comp_+$.

\end{rem}
For instance, the normal law $\frac{1}{\sqrt{2\pi}}\mathrm{e}^{-x^2/2}\,\mathrm{d}x$ is in
$\iu$ from the result of \cite{B}. Moreover, we can easily prove that
Wigner's semicircle law, the Marchenko--Pastur law and the Cauchy
distribution belong to $\iu$.

$\iu$ is closed under the weak topology. This is proved as follows. The
convergence of $\mu_n$ implies the local uniform convergence of the
Voiculescu transforms $\phi_{\mu_n}$ \cite{Be-Vo}. Since $F^{-1}_{\mu
_n}(z) = z + \phi_{\mu_n}(z)$ converges locally uniformly, the limit
function is univalent. Also $F_{\mu_n}$ itself converges to a univalent
function. Therefore, the limit measure belongs to the class $\iu$.

We note that $\iu$ is a proper subset of all $\boxplus$-infinitely
divisible distributions. For instance, let $\mu$ be a probability
measure characterized by the Voiculescu transform $\phi_\mu(z) = \frac
{1}{z-1}+\frac{1}{z+1}$. Then $F^{-1}_\mu(z) = z + \phi_\mu(z) = z+\frac
{1}{z-1}+\frac{1}{z+1}$. We can find two distinct points $z_1,z_2$ such
that $F_\mu^{-1}(z_1) = F_\mu^{-1}(z_2)$ with $z_1 = \mathrm{i}y$ for small $y
> 0$ and $z_2$ near to $\mathrm{i}$. This example also proves that $\iu$ is not
closed under the free convolution, since
the measures $\nu$ and $\lambda$, respectively, defined by $\phi_\nu
(z):= \frac{1}{z-1}$ and $\phi_\lambda(z):=\frac{1}{z+1}$, both belong
to $\iu$.
%We can simply check this: if $z \neq w \in\mathbb{C}_+$ satisfy $z +
%impossible. Hence $F^{-1}_\lambda$ is univalent in $\mathbb{C}_+$, and
%similarly, $F^{-1}_\nu$ is also univalent in $\mathbb{C}_+$.

From Theorem \ref{thm1}, the map $F_{s,r}^\alpha$ is univalent in $\comp_+$ for any admissible $(\alpha, s)$ and $r \geq1$, so that we only
have to prove the inverse
$(F_{s,r}^\alpha)^{-1}$ is univalent in $\comp_+$.

%th5.4 #&#
\begin{thmm}\label{thm21}
Let $(\alpha,s)$ be an admissible pair. Then $\mu_{s,r}^\alpha\in\iu$
if either of the following conditions holds:
\begin{longlist}[(1)]
\item[(1)] $0 < \alpha\leq1$ and $1 \leq r \leq2$;

\item[(2)] $1 \leq\alpha\leq2$ and $1 \leq r \leq\frac{2}{\alpha}$.
\end{longlist}
\end{thmm}
\begin{pf}
By Remark \ref{rem2}, the explicit formula for $(F^{\alpha
}_{s,r})^{-1}(z)$ is
\[
\bigl(F^{\alpha}_{s,r}\bigr)^{-1}(z) = -
\frac{1}{ ({(1-(1-
({s}/{r})(-{1}/{z})^\alpha)^r)}/{s} )^{1/\alpha}}.
\]
Let us define $\theta:= \arg s$ and $E_{s,r}^\alpha(z):= \frac
{1-(1-({s}/{r})(-{1}/{z})^\alpha)^r}{s}$. First, we consider $1
\leq\alpha\leq2$. Since the image of the function $1-\frac
{s}{r}(-\frac{1}{z})^\alpha$ for $z \in\comp_+$ is contained in the
sector $\{z \in\comp\dvt  z \neq0,  -(\pi- \theta) < \arg z < -(\pi
-\theta) +\alpha\pi\}$, one can easily see that $E_{s,r}^\alpha(\comp_+)^c$ contains a half line starting from $0$. In particular,
$E_{s,r}^\alpha$ is univalent in $\comp_+$. Therefore, we can take that
line as a slit for the function $z \mapsto z^{1/\alpha}$, which then
becomes univalent outside the slit.

Let us focus on the case $0 < \alpha\leq1$. If $1 \leq r \leq2$, one
can prove that $E_{s,r}^\alpha(\comp_+)$ is contained in a sector with
central angle $r\alpha\pi$ and therefore $E_{s,r}^\alpha$ is univalent
in $\comp_+$. Since $r\alpha\pi\leq2\alpha\pi$, the map $z \mapsto
z^{1/\alpha}$ can be defined as a univalent map in that sector.
\end{pf}

We take $\alpha=1$ and $s = -1$ as a special case. Then $\mu_{-1, r}^1$
is the beta distribution $B(1-\frac{1}{r},1+\frac{1}{r})$ for $1 < r <
\infty$:
\[
\mu_{-1, r}^1(\mathrm{d}x) = \frac{r\sin(\pi/r)}{\pi} x^{-1/r}(1-x)^{1/r}\,\mathrm{d}x,\qquad
0 < x < 1.
\]
Indeed, now we have $G_{-1,r}^1(z) = r (1- (1-\frac{1}{z}
)^{1/r} )$. It holds that
\[
\lim_{y \searrow0} \operatorname{Im} G_{-1,r}^1(x+\mathrm{i}y)=0
\]
if $x >1$ or $x<0$ and
\[
\lim_{y \searrow0} \operatorname{Im} G_{-1,r}^1(x+\mathrm{i}y)=-r
\operatorname{Im} \biggl( \mathrm{e}^{\mathrm{i}\pi/r} \biggl(\frac{1-x}{x}
\biggr)^{1/r} \biggr) = -r \sin(\pi/r) \biggl(\frac{1-x}{x}
\biggr)^{1/r}
\]
if $x \in(0,1)$.
The Stieltjes inversion formula \cite{Akh} $\mu(\mathrm{d}x) = -\frac{1}{\pi}\lim_{y \searrow0} \operatorname{Im} G_{-1,r}^1(x+\mathrm{i}y)\,\mathrm{d}x$ implies the conclusion.

A consequence of Theorem \ref{thm21} is that the beta distribution
$B(1-\frac{1}{r},1+\frac{1}{r})$ is $\boxplus$-infinitely divisible for
$1 < r \leq2$. More strongly, we can prove the following.

%th5.5 #&#
\begin{thmm}
The beta distribution $B(1-\frac{1}{r},1+\frac{1}{r})$ $(1 < r < \infty
)$ is $\boxplus$-infinitely divisible if and only if $1 < r \leq2$.
The L\'{e}vy measure $\nu_{-1,r}^1$ for $1 < r < 2$ can be calculated as
\[
\label{eq44} \nu_{-1,r}^1(\mathrm{d}x) = \frac{|\sin(r\pi)|}{\pi}
\frac{x^{r-2}(1/r
-x)^r}{(1/r -x)^{2r}- 2x^r (1/r -x)^r \cos(r\pi) + x^{2r}}\,\mathrm{d}x, \qquad 0 < x < \frac{1}{r}.
\]
\end{thmm}
\begin{pf}
By Remark \ref{rem2}(ii), $(F_{-1,r}^1)^{-1}$ is calculated as
\[
\bigl(F_{-1,r}^1\bigr)^{-1}(z) = \biggl(1 -
\biggl(1- \frac{1}{rz} \biggr)^r \biggr)^{-1}.
\]
If $r >2$, the function $1 -  (1- \frac{1}{rz} )^r$ has a zero
point in the upper half-plane, so that $(F_{-1,r}^1)^{-1}$ never be
defined as an analytic function.
If $r\leq2$, $(F_{-1,r}^1)^{-1}$ is analytic and univalent in the
upper half-plane.

For the L\'{e}vy measure, the Voiculescu transform is $\phi_{-1,r}^1(z)=  (1 -  (1- \frac{1}{rz} )^r )^{-1} -z$.
It holds that $\operatorname{Im} ( 1-  (1-\frac{1}{r(x+\mathrm{i}y)} )^r
) \to0$ as $y \searrow0$ if $x > 1/r$ or $x<0$ and that
\[
1- \biggl(1-\frac{1}{r(x+\mathrm{i}y)} \biggr)^r \to1- \mathrm{e}^{\mathrm{i} r \pi}
\biggl(\frac
{x-1/r}{x} \biggr)^r,\qquad  x\in(0,1/r)
\]
as $y \searrow0$. After some more calculations, one can see
\begin{eqnarray*}
\tau(\mathrm{d}x) &=& -\frac{1}{\pi(1+x^2)}\lim_{y \searrow0}\operatorname{Im}
\phi_{-1,r}^1(x+\mathrm{i}y)\,\mathrm{d}x \\
&=& %
\cases{\displaystyle \frac{1}{\pi(1+x^2)}
\frac{|\sin(r\pi)|x^{r}(1/r
-x)^r}{(1/r -x)^{2r}- 2x^r (1/r -x)^r \cos(r\pi) + x^{2r}}\,\mathrm{d}x,& \quad  $x\in (0, 1/r),$ \vspace*{2pt}
\cr
\displaystyle 0, &\quad $\mbox{otherwise},$ }
\end{eqnarray*}
where $\tau$ is the measure in (\ref{eqtau}). $\tau$ does not have an
atom since
$\lim_{y \searrow0} \mathrm{i}y \phi_\mu(x+\mathrm{i}y)=0$ for any $x \in\real$. The L\'
{e}vy measure $\nu_{-1,r}^1$ is equal to $\frac{1+x^2}{x^2}\tau$ as
explained in Section \ref{sec22}.
\end{pf}

If $s=\operatorname{Re}^{\mathrm{i}\theta}$ is not real, the support of $\mu_{s,r}^1$ is
unbounded. The density for large $|x|$ can be calculated as
\[
\mu_{\operatorname{Re}^{\mathrm{i}\theta},r}^1|_{|x| >R}(\mathrm{d}x) = -\frac{r}{\pi}\sum
_{n=1}^\infty \pmatrix{1/r
\cr
n+1}
\frac{R^n\sin(n\theta)}{x^{n+1}}\,\mathrm{d}x.
\]
In particular, $\mu_{s,r}^1$ belongs to a class introduced in \cite{Has5}.

%s5.2 #&#
\subsection{\texorpdfstring{The case $\alpha=1,  r=3$}{The case alpha=1, r=3}}
In Section \ref{subsec41}, the free infinite divisibility of $\mu^\alpha_{s,r}$
was proved for some parameters in terms of the class $\iu$.
In Section \ref{sec2}, we succeeded in proving the free infinite
divisibility of $\mu^\alpha_{s,2}$ since the Voiculescu transform had a
quite explicit form. For other parameters, it is difficult to
investigate the free infinite divisibility. A possible case is for
$\alpha=1$ and $r = 3$. In this case, the Voiculescu transform has a
quite explicit form as in the case $r=2$ and $\boxplus$-infinite
divisibility can be determined completely. Indeed, the Voiculescu
transform is
\[
\phi_{3s,3}^1(z) = \frac{-3s}{1-(1+s/z)^3} -z =
\frac{-3sz^2
-s^2z}{3z^2 +3zs +s^2}.
\]
In contrast to the case $r=2$, infinite divisibility depends on the
parameter $s$ if $r = 3$.
%
%th5.6 #&#
\begin{thmm} Let $ 0 \leq\arg s \leq\pi$. Then
$\mu_{s,3}^1$ is $\boxplus$-infinitely divisible if and only if $\arg s
= \frac{\pi}{2}$. The L\'{e}vy measure $\nu_{3\mathrm{i},3}^1$ for $\mu_{3\mathrm{i},
3}^{1}$ can be calculated as
\[
\nu_{3\mathrm{i},3}^1(\mathrm{d}x) = \frac{9x^2}{\pi(9x^4 + 3x^2 +1)}\,\mathrm{d}x, \qquad x \in\real.
\]
\end{thmm}
\begin{pf}
Because of Remark \ref{rem2}(iv), let us consider $s=\mathrm{e}^{\mathrm{i}\theta}$ for
simplicity. After some calculations, we get
\[
\im\phi_{3s,3}^1(x + \mathrm{i}0) = -\frac{9x^4 \sin\theta+3x^3 \sin2\theta
}{|3x^2 + 3xs + s^2|^2}.
\]
Therefore, if $\theta\neq0,\pi,\frac{\pi}{2}$, we can find a point
$x_0 \in\real$ such that $\im\phi_{3s,3}(x_0 +\mathrm{i}0) >0$.
If $\theta= \pi$, we can calculate
\[
\im\phi_{-3,3}^1(x + \mathrm{i}y) = -\frac{y[6y^2+6(x-1/2)^2 -1/2]}{|3x^2 + 3xs
+ s^2|^2},
\]
and therefore $\phi_{3s,3}$ takes a positive value at a point. By
symmetry, also $\phi_{3,3}$ can take a positive value. Therefore, $\mu_{3s,3}$ is not $\boxplus$-infinitely divisible for $\theta\neq\frac
{\pi}{2}$.

For $\theta=\frac{\pi}{2}$, after some calculations, it holds that
\[
\im\phi_{3\mathrm{i},3}^1(x+\mathrm{i}y) = -\frac{9x^4
+18x^2y^2+9y^4+12x^2y+12y^3+6y^2+y}{|3x^2 + 3xi -1|^2} <0,
\]
so that $\mu_{3\mathrm{i},3}^1$ is $\boxplus$-infinitely divisible. The measure
$\tau$ in (\ref{eqtau}) is absolutely continuous with respect to the
Lebesgue measure since
\[
-\frac{1}{\pi}\operatorname{Im} \phi_{3\mathrm{i},3}^1(x+\mathrm{i}y)
\to\frac{9x^4}{\pi
|3x^2+3ix-1|^2} = \frac{9x^4}{\pi(9x^4 + 3x^2+1)}
\]
locally uniformly in $\real$ as $y \searrow0$. The L\'{e}vy measure is
given by $\frac{1+x^2}{x^2}\tau$, where $\tau$ is defined in~(\ref{eqtau}).
\end{pf}

%s5.3 #&#
\subsection{\texorpdfstring{Noninfinite divisibility for $1<\alpha\leq2$ and large $r$}
{Noninfinite divisibility for 1<alpha<=2 and large r}}
We prove the following.
%
%pr5.7 #&#
\begin{prop} For $\alpha> 1$ and $\arg s \in[0,(2-\alpha)\pi]$, there
exists an $r_0=r_0(\alpha, s) > 1$ such that $\mu_{s,r}^\alpha$ is not
$\boxplus$-infinitely divisible for $r > r_0$.
\end{prop}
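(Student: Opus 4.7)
The plan is to exhibit, for $r$ large enough, a point $z_r \in \mathbb{C}_+$ at which the Voiculescu transform $\phi^\alpha_{s,r}$ must blow up, contradicting the locally bounded analytic extension $\mathbb{C}_+ \to \mathbb{C}_- \cup \mathbb{R}$ that $\boxplus$-infinite divisibility would require. By Remark \ref{rem2}(ii),
\[
\phi^\alpha_{s,r}(z) = F^\alpha_{s/r,1/r}(z) - z = -\left(\frac{s}{1-\left(1 - \tfrac{s}{r}(-\tfrac{1}{z})^\alpha\right)^r}\right)^{1/\alpha} - z,
\]
so I would look for zeros of the denominator, i.e.\ for $z_r \in \mathbb{C}_+$ with $\tfrac{s}{r}(-\tfrac{1}{z_r})^\alpha = 1 - e^{2\pi i k/r}$ for some nonzero integer $k$, equivalently $(-1/z_r)^\alpha = \tfrac{r}{s}(1-e^{2\pi i k/r})$. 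As $r \to \infty$ this right-hand side tends to $-2\pi i k/s$. A short case analysis shows that, under $\alpha > 1$ and $\arg s \in [0,(2-\alpha)\pi]$, one can always pick $k \in \{-1,+1\}$ so that $-2\pi i k/s$ lies strictly inside the open sector $\Sigma_\alpha := \{\zeta : 0 < \arg\zeta < \alpha\pi\}$: take $k=-1$ whenever $\arg s < \pi/2$ (giving $\arg(2\pi i/s) = \pi/2 - \arg s \in (0,\pi/2]$), and $k=+1$ otherwise (which forces $\alpha \leq 3/2$, and then $\arg(-2\pi i/s) = 3\pi/2 - \arg s \in [(\alpha-\tfrac{1}{2})\pi,\pi] \subset (0,\alpha\pi)$ using $\alpha>1$). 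Since $z \mapsto (-1/z)^\alpha$ maps $\mathbb{C}_+$ bijectively onto $\Sigma_\alpha$, continuity yields $r_0 = r_0(\alpha,s)$ such that for every $r > r_0$ a solution $z_r$ exists in a fixed compact subset of $\mathbb{C}_+$.

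Next I would join a point of $\Gamma_{\eta,M}$ (for instance $iT$ with $T$ large) to $z_r$ by a path $\gamma$ in $\mathbb{C}_+$ along which the quantity $w(z) := 1 - \tfrac{s}{r}(-\tfrac{1}{z})^\alpha$ stays in a small neighborhood of $1 \in \mathbb{C}$; this is feasible on any fixed compact because the factor $s/r$ is uniformly small for large $r$. Since $w$ never winds around the origin along $\gamma$, the analytic continuation of $w^r = e^{r\log w}$ along $\gamma$ is single-valued and satisfies $w(z_r)^r = e^{r \cdot 2\pi i k/r} = 1$. Hence the denominator in the displayed formula vanishes at $z_r$, and $|\phi^\alpha_{s,r}(z)| \to \infty$ as $z \to z_r$ along $\gamma$ (any branch of the $(1/\alpha)$-th power affects only the argument, not the modulus of the blow-up). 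If $\mu^\alpha_{s,r}$ were $\boxplus$-infinitely divisible, $\phi^\alpha_{s,r}$ would extend to a locally bounded analytic map $\tilde\phi \colon \mathbb{C}_+ \to \mathbb{C}_- \cup \mathbb{R}$; on a simply connected tubular neighborhood of $\gamma \setminus \{z_r\}$ the identity theorem forces $\tilde\phi$ to agree with the continuation constructed above, contradicting the blow-up at $z_r$.

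The main technical hurdle is controlling the monodromy of the non-integer $r$-th power: different branches of $\log w$ give different values of $w^r$, and only one branch makes $1 - w^r$ vanish at $z_r$. Keeping $w$ in a small disk about $1$ along $\gamma$---possible precisely because $s/r$ is small---pins down the correct branch and guarantees that the singularity at $z_r$ is inherited by every analytic continuation of $\phi^\alpha_{s,r}$ from $\Gamma_{\eta,M}$, closing the argument.
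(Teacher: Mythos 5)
Your proof is correct and takes essentially the same route as the paper's: both exhibit, for $r$ large, a point of $\mathbb{C}_+$ where $1-\tfrac{s}{r}(-\tfrac{1}{z})^\alpha$ equals a nontrivial $r$-th root of unity $e^{\pm 2\pi i/r}$, so that the denominator of $(F^\alpha_{s,r})^{-1}$ vanishes and $\phi^\alpha_{s,r}$ cannot extend analytically to $\mathbb{C}_+$. The paper locates this root of unity geometrically (the image sector of opening $\alpha\pi>\pi$ with vertex $1$ meets the unit circle in a nontrivial arc), whereas you solve for it directly and pass to the limit $r\to\infty$; your added care with branches and the identity-theorem step only makes explicit what the paper leaves implicit.
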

\begin{pf}
Let $\theta:=\arg s$. It is sufficient to find a zero point of the
function $E_{s,r}^\alpha(z):=  \frac{1-(1-({s}/{r})(-
{1}/{z})^\alpha)^r}{s}$. The function
$1-\frac{s}{r}(-\frac{1}{z})^\alpha$ maps $\comp_+$ to a shifted sector
$\Omega:=\{z \in\comp\dvt  z \neq0,  -(\pi- \theta) < \arg(z-1) < -(\pi
-\theta) +\alpha\pi\}$. If $\alpha>1$, $\Omega$ and the unit circle
$\{z \in\comp\dvt |z| =1\}$ have intersection which is an arc with an end
point $1$. Let us denote by $\varphi\in(-\pi,\pi)\setminus\{0\}$ the
angle of the other end point of that arc. We can take $r_0(\alpha,s)$
to be~$\frac{2\pi}{|\varphi|}$.
\end{pf}

\section*{Acknowledgements}
This work was initiated during the authors' visit to CIMAT, thanks to
the hospitality of Professor V\'{i}ctor P\'{e}rez-Abreu, who also
suggested many improvements of the paper. Octavio Arizmendi was
supported by funds of R. Speicher from the Alfried Krupp von Bohlen und
Halbach Stiftung. Takahiro Hasebe was supported by Global COE Program
at Kyoto University.
%
% imsref loaded by akundreckaite, 2012-10-01 16:04:21

%

%suskaldyti doi

\printhistory

\end{document}